\documentclass{article}
\usepackage{indentfirst}
\usepackage{amsthm}
\usepackage{amsfonts}
\usepackage{amssymb}
\usepackage{overpic}

\newcommand{\COLORON}{1}
\newcommand{\NOTESON}{0}
\newcommand{\Debug}{0}

\newcommand{\aut}{\mathrm{Aut}}
\newcommand{\CQ}{\ensuremath{\cc_\Q}}
\newcommand{\ILC}{invariant under local complementation}

\newcommand{\dsg}{$d$-sphere graph}

\usepackage[usenames]{color} 
\usepackage{amsthm,amssymb,amsmath,bbm,enumerate,graphicx,epsf,stmaryrd,accents}
\usepackage[bookmarks, colorlinks=false, breaklinks=true]{hyperref} 

\usepackage{authblk}

\hyphenation{com-pac-ti-fi-cation}

\newcommand{\comment}[1]{}
\newcommand{\COMMENT}[1]{}

\definecolor{darkgray}{rgb}{0.3,0.3,0.3}
\newcommand{\defi}[1]{{\color{darkgray}\emph{#1}}}

\newcommand{\acknowledgement}{\section*{Acknowledgement}}



\comment{
	\begin{lemma}\label{}	
\end{lemma}
\begin{proof}

\end{proof}

\begin{theorem}\label{}
\end{theorem} 
\begin{proof} 	

\end{proof}

}



\newtheorem{proposition}{Proposition}[section]

\newtheorem{theorem}[proposition]{Theorem}
\newtheorem{corollary}[proposition]{Corollary}

\newtheorem{lemma}[proposition]{Lemma}
\newtheorem{observation}[proposition]{Observation}
\newtheorem{conjecture}{{Conjecture}}[section]

\newtheorem{problem}[conjecture]{{Problem}}

\newtheorem{examp}[proposition]{Example}

\newtheorem{remark}[proposition]{Remark}



\newcommand{\FIG}{0}

\ifnum \NOTESON = 1 \newcommand{\note}[1]{ 

\hspace*{-30pt}
	{\color{blue}  NOTE: \color{Turquoise}{\small  \tt \begin{minipage}[c]{1.1\textwidth}  #1 \end{minipage} \ignorespacesafterend }} 
	
	}
\else \newcommand{\note}[1]{} \fi

\newcommand{\afsubm}[1]{ \ifnum \Debug = 1 {\mymargin{#1}}
\fi} 

\ifnum \Debug = 1 
\else  \fi

\ifnum \FIG = 1 \newcommand{\fig}[1]{Figure ``{#1}''}
\else \newcommand{\fig}[1]{Figure~\ref{#1}} \fi

\ifnum \FIG = 1 
\else  \fi

\ifnum \Debug = 1 \usepackage[notref,notcite]{showkeys}
\fi

\ifnum \COLORON = 0 \renewcommand{\color}[1]{}
\fi



\newcommand{\N}{\ensuremath{\mathbb N}}
\newcommand{\R}{\ensuremath{\mathbb R}}

\newcommand{\Q}{\ensuremath{\mathbb Q}}
\newcommand{\BS}{\ensuremath{\mathbb S}}

\newcommand{\cc}{\ensuremath{\mathcal C}}

\newcommand{\ch}{\ensuremath{\mathcal H}}
\newcommand{\ci}{\ensuremath{\mathcal I}}

\newcommand{\ck}{\ensuremath{\mathcal K}}


\newcommand{\sm}{\backslash}


\makeatletter
\DeclareRobustCommand{\cev}[1]{%
  \mathpalette\do@cev{#1}%
}
\newcommand{\do@cev}[2]{%
  \fix@cev{#1}{+}%
  \reflectbox{$\m@th#1\vec{\reflectbox{$\fix@cev{#1}{-}\m@th#1#2\fix@cev{#1}{+}$}}$}%
  \fix@cev{#1}{-}%
}
\newcommand{\fix@cev}[2]{%
  \ifx#1\displaystyle
    \mkern#23mu
  \else
    \ifx#1\textstyle
      \mkern#23mu
    \else
      \ifx#1\scriptstyle
        \mkern#22mu
      \else
        \mkern#22mu
      \fi
    \fi
  \fi
}

\makeatother


\newcommand{\seq}[1]{\ensuremath{(#1_n)_{n\in\N}}} 

 

\newcommand{\g}{\ensuremath{G\ }}
\newcommand{\G}{\ensuremath{G}}







\newcommand{\Cg}{Cayley graph}


\newcommand{\Lr}[1]{Lemma~\ref{#1}}

\newcommand{\Tr}[1]{Theorem~\ref{#1}}

\newcommand{\Sr}[1]{Section~\ref{#1}}

\newcommand{\Prr}[1]{Pro\-position~\ref{#1}}

\newcommand{\Cr}[1]{Corollary~\ref{#1}}

\newcommand{\Or}[1]{Observation~\ref{#1}}

\newcommand{\Rr}[1]{Remark~\ref{#1}}




\renewcommand{\iff}{if and only if}
\newcommand{\fe}{for every}

\newcommand{\st}{such that}

\newcommand{\obda}{without loss of generality}

\newcommand{\wrt}{with respect to}






\newcommand{\labtequ}[2]{
 \begin{equation} \label{#1} 	\begin{minipage}[c]{0.9\textwidth}  #2 \end{minipage} \ignorespacesafterend \end{equation} }

\newcommand{\mymargin}[1]{
 \ifnum \Debug = 1
  \marginpar{%
    \begin{minipage}{\marginparwidth}\small%
      \begin{flushleft}%
        {\color{blue}#1}%
      \end{flushleft}%
   \end{minipage}%
  }%
 \fi
}%

\newcommand{\extras}[1]{
 \ifnum \Debug = 1
\section{Extras} #1
 \fi
}%

\newcommand{\mySection}[2]{}


\newcommand{\DB}{\cite{diestelBook25}}











\begin{document}
	\title{Circle graphs and the automorphism group of the circle}
	
\author{Agelos Georgakopoulos\thanks{Supported by EPSRC grants EP/V048821/1 and EP/V009044/1.}}
\affil{  {Mathematics Institute}\\
 {University of Warwick}\\
  {CV4 7AL, UK}}

\date{\today}

\maketitle

\begin{abstract}
We prove that $\aut(\BS^1)$ coincides with the automorphism group of the \emph{circle graph} \cc, i.e.\ the intersection graph of the family of chords of $\BS^1$. 

We prove that the countable subgraph of \cc\ induced by the rational chords is a strongly universal element of the family of circle graphs, and that it is invariant under local complementation. The only other known connected graphs that have the latter property are $K_2$ and the Rado graph.
\end{abstract}

{\bf{Keywords:} } circle graph, universal graph, automorphism, Rado graph,\\ local complementation, Ivanov's metaconjecture, sphere graph. \\

{\bf{MSC 2020 Classification:}} 05C63, 05C25, 05C10, 05C62, 05E18.  \\

\section{Introduction}

It is well-known that for every group $\Gamma$ there is a graph \g \st\ $\Gamma$  coincides with the automorphism group $\aut(G)$ of \G\ \cite{dGrGro,SabGra}. The construction of such a graph \g is rather ad-hoc in this generality: it boils down to decorating the edges of a \Cg\ of $\Gamma$ in a way that eliminates unwanted symmetries. But it is sometimes desirable to find such a \g with additional properties that will help us draw conclusions about $\Gamma$. A prime example is a well-known theorem of Ivanov \cite{Ivanov}, stating that the mapping class group $Mod(S)$ of each topological surface $S$ of finite type and genus at least 2 coincides with  $\aut(C(S))$, where $C(S)$ stands for the so-called {curve graph} on $S$. This theorem has found many applications, establishing $C(S)$ as a central tool in the study of  $Mod(S)$; see \cite{DiKoGoMod} and references therein. Ivanov stated the influential \defi{metaconjecture} that one can replace $C(S)$ by any rich enough graph naturally associated to $S$ in the above theorem \cite{IvaFif}. 

Our first main result is similar in spirit to Ivanov's metaconjecture, and provides a natural graph $\cc$ such that $\aut(\cc)$ coincides with $\aut(\BS^1)$, i.e. the group of homeomorphisms of the unit circle. The group $\aut(\BS^1)$ is on its own a rich and classical topic, a melting pot of algebra, analysis, geometry \& dynamics. Several surveys and monographs have been devoted to it, and we refer to them for further study \cite{Beklaryan,GhyGro,Navas}. Several other important (families of) groups are subgroups of $\aut(\BS^1)$, e.g.\ $PSL(2, \R)$ and Thompson's group.

\medskip
The aforementioned graph $\cc$ is what we will call \defi{the circle graph}: its vertices are the chords of $\BS^1$, 
and two chords form an edge of \cc\ whenever they intersect (either at a point of $\BS^1$, or one inside it).

Note that each homeomorphism $g$ of $\BS^1$ canonically induces an automorphism $\pi(g)$ of \cc. Our first theorem provides a converse that makes it similar to Ivanov's aforementioned theorem: 

\begin{theorem} \label{thm aut}
The map $\pi$ is an isomorphism from $\aut(\BS^1)$ onto $\aut(\cc)$.
\end{theorem}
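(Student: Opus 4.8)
The plan is to show that $\pi$ is a well-defined injective homomorphism and then that it is surjective, the latter being where essentially all the work lies. That $\pi$ is a homomorphism is immediate once one checks it is well defined: a homeomorphism $g$ of $\BS^1$ maps chords to chords and either preserves or reverses the cyclic order of their endpoints, and in both cases it preserves the relation ``the endpoints of one chord interleave those of the other'' as well as ``two chords share an endpoint'', so adjacency in $\cc$ is preserved. Injectivity is easy: if $\pi(g)$ fixes every chord then, for a point $a$ and two chords $\{a,b\},\{a,c\}$ with $b\neq c$, the equalities $\{g(a),g(b)\}=\{a,b\}$ and $\{g(a),g(c)\}=\{a,c\}$ force $g(a)\in\{a,b\}\cap\{a,c\}=\{a\}$, so $g=\mathrm{id}$. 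Hence $\pi$ embeds $\aut(\BS^1)$ into $\aut(\cc)$, and it remains to show that every $\phi\in\aut(\cc)$ has the form $\pi(g)$.

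The heart of the argument is to recover, purely combinatorially, the distinction between the two kinds of adjacency in $\cc$: two adjacent chords either \emph{touch} (share an endpoint on $\BS^1$) or \emph{cross} (meet in the interior of the disc). I would characterise these as follows. Given adjacent chords $e,f$, let $D$ be the set of chords that are non-adjacent to both, and form the auxiliary graph on $D$ in which two members are joined whenever they have a common neighbour lying in $D$. A short geometric analysis should show that a chord disjoint from both $e$ and $f$ must have both endpoints in a single one of the arcs into which the endpoints of $e$ and $f$ cut $\BS^1$; that two chords in the same arc always admit a common neighbour inside $D$ (take a chord of that arc straddling both); and that two chords lying in different arcs have neither a common $D$-neighbour nor an edge between them. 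Consequently the connected components of the auxiliary graph are exactly these arcs, of which there are three when $e,f$ touch and four when they cross. This count is invariant under $\phi$, so $\phi$ preserves touching and crossing separately. I expect this lemma, and in particular the verification that the components are \emph{precisely} the arcs (together with the nondegeneracy that each arc carries chords and that the straddling construction works in every relative position of two same-arc chords), to be the main obstacle.

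With touching understood, the remaining steps are comparatively clean. A point $a\in\BS^1$ is recovered as its \emph{pencil} $P_a$ of all chords through $a$; I would check that the pencils are exactly the maximal cliques all of whose pairs touch, and that $P_a\cap P_b$ is the single chord $\{a,b\}$. Since $\phi$ preserves touching, it permutes pencils, inducing a bijection $g$ of $\BS^1$ with $\phi(\{a,b\})=\{g(a),g(b)\}$ for every chord. Finally, because two chords $\{x,y\}$ and $\{u,v\}$ cross precisely when $\{x,y\}$ separates $u$ from $v$ on $\BS^1$, and $\phi$ preserves crossing, the map $g$ preserves the circular separation relation, hence preserves the cyclic order of $\BS^1$ up to reflection. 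A bijection of $\BS^1$ that is monotone for the cyclic order is automatically continuous (cut the circle at a point and use that a monotone surjection of a dense, complete linear order is an order-isomorphism, hence a homeomorphism for the order topology), so $g\in\aut(\BS^1)$. As $\pi(g)$ and $\phi$ agree on every chord, $\phi=\pi(g)$; together with injectivity this proves that $\pi$ is an isomorphism.
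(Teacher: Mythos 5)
Your proposal is correct in substance and follows the same skeleton as the paper's proof: show $\pi$ is an injective homomorphism, recover the points of $\BS^1$ from automorphism-invariant combinatorial data (your pencils $P_a$ are exactly the paper's boundary cliques $K_x$), deduce an induced bijection $g$ of $\BS^1$ that preserves the crossing/separation relation, and conclude continuity from monotonicity (the paper isolates this step as Proposition~\ref{nestedness}). The genuine difference lies in how you establish the key lemma that an automorphism preserves the touch/cross dichotomy. The paper (Lemma~\ref{lem bc}) argues by contradiction: if non-incident $C,D$ were mapped to incident chords, it places four chords $F_1,\dots,F_4$, one in each of the four arcs of $\BS^1\sm(C\cup D)$, together with connecting chords $W_{ij}$, and uses the pigeonhole principle on the three arcs of $\BS^1\sm(h(C)\cup h(D))$ to produce a chord that joins two of the $h(F_i)$ within one region and therefore should, but cannot, avoid $h(C)\cup h(D)$. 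Your auxiliary graph on the common non-neighbourhood $D$ of $\{e,f\}$, whose components you identify with the arcs (three for touching, four for crossing), encodes the same geometric fact --- the number of regions the two chords cut the disc into --- but in a form that is manifestly invariant under any automorphism, so preservation of the dichotomy is immediate rather than requiring a witness configuration to be transported through the automorphism. Your verification of the component structure (same-arc chords admit a common $D$-neighbour since the regions are convex; different-arc chords cannot have one, since a connected set meeting two regions must meet $e\cup f$) is sound.

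One slip needs patching: pencils are \emph{not} exactly ``the maximal cliques all of whose pairs touch''. For any three points $a,b,c\in\BS^1$, the triangle $\{ab,bc,ca\}$ is pairwise touching and maximal with this property (no fourth chord can share an endpoint with all three), yet it is not a pencil. Pairwise-touching families of size at least four do have a common endpoint, so the correct statement is that pencils are the \emph{infinite} (equivalently, size at least four) maximal pairwise-touching cliques; since $\phi$ preserves touching and cardinality, it still permutes pencils, and the rest of your argument is unaffected. (The paper makes a similarly harmless over-claim when it asserts that pairwise incident $n$-tuples for $n\geq 3$ are exactly tuples with a common endpoint; this too fails only for triangles, and is repaired in the same way because boundary cliques are infinite.)
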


For the rest of this introduction we will focus on the \defi{rational circle graph \CQ}, i.e.\ the (countable) subgraph of  \cc\ induced by the chords with both end-points being rational. The proof of \Tr{thm aut} adapts to show that $\aut(\CQ)$ coincides with the subgroup of $\aut(\BS^1)$ that fixes \Q\ (\Cr{cor cq}).

But \cc\ and \CQ\ are interesting from a purely graph theoretic perspective as well. To begin with, we remark that they are vertex-transitive by construction. However, we will observe that they are not edge-transitive (\Rr{rem et}).  

Their finite induced subgraphs, which are called \defi{circle graphs}, are studied for a variety of reasons. They are similar to interval graphs as well as circle-arc graphs, which are classical topics. (In fact I expect many of the constructions and results of this note to extend to other such graph classes.) Charactering such classes in terms of their forbidden induced subgraphs is a mainstream question, which remains open for circle graphs; see \cite{DuGrSaStr} for a survey. Nevertheless, Bouchet \cite{BouCir} obtained a characterization in terms of just three forbidden \defi{vertex minors}. We say that a graph $H$ is a vertex minor of a graph \G, if $H$ can be obtained from an induced subgraph $G'$ of \g by a sequence of \defi{local complementation}, where the local complementation \defi{$G'_v$} of $G'$ at a vertex $v$ is obtained by swapping edges and non-edges in the neighbourhood $N(v)$. We will show that Bouchet's result extends to the infinite case (\Or{obs}).

There has been renewed interest in circle graphs recently, due to a conjectural structure theorem of  Geelen \cite{McCarty}  for vertex minors analogous to that of Robertson \& Seymour \cite{GM17} for minors. The circle graphs thereby play an important role analogous to that of planar graphs in the theory of Robertson \& Seymour. Our next result is

\begin{theorem} \label{thm locom}
\CQ\ is invariant under local complementation. 
\end{theorem}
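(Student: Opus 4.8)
The natural first attempt is geometric. For finite \emph{interlacement} graphs there is a classical recipe realising local complementation at a chord $c$ with endpoints $p,q\in\Q$: reverse the cyclic order of the chord-endpoints lying on one of the two arcs of $\BS^1$ cut out by $p,q$, leaving the rest of the diagram fixed (equivalently, reflect that arc). A short computation with the interleaving criterion shows that this re-embedding flips the crossing relation of any two chords that both cross $c$ and leaves every other crossing unchanged, so it converts the crossing pattern of the chord diagram into that of the locally complemented graph. The trouble, and the crux of \Tr{thm locom}, is that in \cc\ two chords are adjacent already when they \emph{share an endpoint}. If two neighbours $x,y$ of $c$ share an endpoint $s$ off $c$ (so both also cross $c$), local complementation must make $x,y$ non-adjacent, whereas reversing an arc leaves them sharing $s$, hence adjacent. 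Thus no arc-reversal — and, I expect, no explicit homeomorphism of $\BS^1$ — yields a chord representation of $(\CQ)_c$ (the local complement of \CQ\ at $c$): the isomorphism $(\CQ)_c\cong\CQ$ cannot be induced by an element of $\aut(\BS^1)$, and must be produced abstractly.

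The plan is therefore to deduce the theorem from the strong universality of \CQ, in close analogy with the proof that the Rado graph is \ILC. First I would check that $(\CQ)_c$ is again a countable circle graph. Local complementation commutes with passing to induced subgraphs, in the form $(\CQ)_c[S]=\big((\CQ[S\cup\{c\}])_c\big)[S]$ for every finite $S\subseteq V(\CQ)$; since $\CQ[S\cup\{c\}]$ is a finite circle graph and finite circle graphs are closed under local complementation \cite{BouCir}, every finite induced subgraph of $(\CQ)_c$ is a circle graph. By the infinite form of Bouchet's theorem (\Or{obs}), which itself rests on the compactness theorem for propositional logic, a countable graph all of whose finite induced subgraphs are circle graphs is a circle graph; hence $(\CQ)_c$ is a countable circle graph.

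It then remains to show that $(\CQ)_c$ carries the extension property that characterises \CQ\ up to isomorphism among countable circle graphs. The decisive structural fact is that local complementation at $c$ is an involution, $((\CQ)_c)_c=\CQ$, and is class-preserving: for a finite induced subgraph $F\ni c$, a one-vertex circle-graph extension $H$ of $F$ corresponds bijectively, via $H\mapsto (H)_c$, to a one-vertex circle-graph extension $(H)_c$ of $(F)_c$. Hence any one-vertex extension problem posed in $(\CQ)_c$ can be transported across the involution into \CQ, solved there by strong universality — ultimately by the density of \Q\ in $\BS^1$, which lets one realise any admissible new chord with rational endpoints — and transported back, giving $(\CQ)_c$ the same property and so $(\CQ)_c\cong\CQ$ by a back-and-forth argument. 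I expect the main obstacle to be exactly this transport step: one must pin down the right notion of an \emph{admissible} (geometrically realisable) one-vertex extension and verify that admissibility, not merely the abstract adjacency pattern, is preserved by local complementation, so that an extension realisable against the fixed rational chord diagram of \CQ\ maps to a realisable extension of $(\CQ)_c$. Establishing this invariance of admissibility is the heart of the argument.
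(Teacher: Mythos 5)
Your first step is sound: $(\CQ)_c$ is a countable circle graph, by Bouchet's theorem applied to finite induced subgraphs together with \Or{obs}. But the part you yourself call the heart of the argument is not merely an obstacle left to resolve --- it is a gap that cannot be closed in the form you propose. There is no \emph{abstract} extension property characterising \CQ\ among countable circle graphs, because realisability of a one-vertex extension depends on the chord diagram of the finite subgraph, not just on its isomorphism type. Concretely: take three pairwise non-adjacent vertices $a,b,c$ of \cc\ whose endpoints lie in cyclic order $a_1a_2b_1b_2c_1c_2$ (``star position''). No chord of $\BS^1$ intersects all three: crossing all three would require an endpoint strictly inside each of the three disjoint arcs $(a_1,a_2)$, $(b_1,b_2)$, $(c_1,c_2)$, and if the chord shares an endpoint with, say, $a$, then to meet $b$ and $c$ its other endpoint would have to lie in both of the disjoint closed arcs $[b_1,b_2]$ and $[c_1,c_2]$. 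Yet three pairwise disjoint chords in ``path position'' (where $b$ separates $a$ from $c$) do have a common neighbour, and both triples induce the same abstract graph $\overline{K_3}$. So the one-vertex extension property fails in \CQ, \CQ\ is not homogeneous, and a back-and-forth on abstract extensions cannot even start. Your fallback --- ``admissible'', i.e.\ diagram-relative, extensions --- is circular: to transport admissibility across the involution $H\mapsto H_c$ you must know how local complementation acts on the \emph{fixed} chord diagram of \CQ, and that is exactly what is unavailable, since arc-reversal breaks down on diagrams with incident chords, as you observed at the outset.

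The paper closes this gap by doing precisely what you dismissed: it keeps the geometry but changes the representation. Invariance under local complementation is a statement about the abstract graph, so one may replace the standard diagram of \CQ\ by any isomorphic diagram, and the paper constructs one, $\cc'$, in which no two chords share an endpoint: each rational point $q$ of $\BS^1$ is blown up into a small interval $o(q)$ (the intervals forming the complement of the Cantor set, which is ordered like \Q\ so that \eqref{betw} still holds), and the chords formerly ending at $q$ are fanned out to distinct points of $o(q)$, arranged so that chords which used to share the endpoint $q$ now pairwise cross, while all other crossings and non-crossings are unchanged. On $\cc'$ the classical arc-reversal realises local complementation at every vertex; moreover the reversal can be chosen to map the diagram onto itself ($I\cap o(p)$ onto $I\cap o(q)$, and the Cantor-set gaps in $I$ onto themselves), so that $\cc'_v$ and $\cc'$ have literally the same set of chords, giving $\cc'_v=\cc'$ and hence the theorem. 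Your instinct that no homeomorphism of $\BS^1$ can induce the local complement is correct only for the standard representation; the missing idea is to re-represent \CQ\ without incident chords first.
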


This is a rather rare property of graphs, and the only other connected graphs I know that enjoy it are the Rado graph (\Or{Rado}) and $K_2$.

\medskip
Recall that a graph $U$ is \defi{strongly universal} for a graph class $\mathcal{C}\ni U$, if every $G\in \mathcal{C}$ is an induced subgraph of $U$. Understanding which classes of graphs admit strongly universal elements is a classical topic in infinite graph theory, see \cite{KomPacUni} for a survey. The best known such construction is the \defi{Rado graph}, a strongly universal element of the class of all countable graphs. But there are natural classes of graphs that fail to have a universal element even under much weaker containment notions, e.g.\ the rayless graphs \cite{universal}. Our next main result is 

\begin{theorem} \label{thm univ}
\CQ\ is a strongly universal element for the class of countable circle graphs. 
\end{theorem}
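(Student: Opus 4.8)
The plan is to reduce the statement to a density argument: adjacency of two chords in \cc\ depends only on the cyclic order of their endpoints, so it suffices to re-draw any given chord representation so that all endpoints become rational without disturbing this cyclic order. First I would record the governing combinatorial fact. For two chords with endpoint sets $\{a,b\}$ and $\{c,d\}$ (with $a\ne b$, $c\ne d$), they form an edge of \cc\ exactly when either $\{a,b\}\cap\{c,d\}\ne\emptyset$ or the four points alternate around $\BS^1$; in both cases this is a property of the cyclic order of the endpoints alone. Consequently, any injection of the endpoint set into $\BS^1$ that preserves the cyclic order induces an isomorphism between the corresponding induced subgraphs of \cc. Note also that \CQ\ is, by definition, an induced subgraph of \cc\ on a countable (rational) vertex set, hence is itself a countable circle graph lying in the class; so only the universality (containment) part needs proof.

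Now let $G$ be a countable circle graph, and fix a representation of $G$ by a family $\{c_v : v\in V(G)\}$ of pairwise distinct chords, writing $a_v\ne b_v$ for the endpoints of $c_v$ and $P\subseteq\BS^1$ for the (countable) set of all these endpoints. Let $D\subseteq\BS^1$ be the set of rational points; it is countable and dense. Pick a point $*\in\BS^1\setminus(P\cup D)$, which exists since $\BS^1$ is uncountable. Cutting $\BS^1$ at $*$ turns both $P$ and $D$ into countable linearly ordered sets, and $(D,<)$ becomes a countable dense linear order without endpoints. By Cantor's theorem on countable dense linear orders, $(D,<)$ is order-universal, so there is an order-preserving injection $\phi\colon (P,<)\hookrightarrow (D,<)$. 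Because $\phi$ is injective and order-preserving and $*$ lies in neither $P$ nor $D$, $\phi$ preserves the cyclic order of the endpoints on $\BS^1$; in particular $\phi(a_v)\ne\phi(b_v)$, so each image pair spans a genuine rational chord $c'_v$.

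Define $\Phi\colon V(G)\to V(\CQ)$ by $v\mapsto c'_v$. Injectivity of $\phi$ together with the distinctness of the $c_v$ yields that $\Phi$ is injective, and the combinatorial fact above shows that $v,w$ are adjacent in $G$ if and only if $c'_v,c'_w$ are adjacent in \CQ\ (sharing of an endpoint and alternation are both respected by $\phi$). Hence $\Phi$ realizes $G$ as the induced subgraph $\CQ[\Phi(V(G))]$, which is the desired strong universality. The only genuinely delicate point is the insistence that the representation use \emph{pairwise distinct} chords, so that $\Phi$ lands injectively in the vertex set of \CQ: if the class of countable circle graphs is taken to mean the countable induced subgraphs of \cc, this is automatic, but otherwise one must first perturb any coincident chords apart within small arcs containing no other endpoint, which preserves all adjacencies. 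Everything past that is a routine application of the back-and-forth characterization of \Q.
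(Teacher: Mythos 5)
Your proof is correct and is essentially the paper's own argument: both rest on the observation that adjacency in \cc\ depends only on the cyclic order of the chords' endpoints, and then embed the countable endpoint set into the rational points of $\BS^1$ preserving that cyclic order. The only cosmetic difference is that you cut the circle at a point outside $P\cup D$ and invoke Cantor's universality theorem for $(\Q,<)$, whereas the paper carries out the equivalent order-embedding recursion directly using the density property \eqref{betw}.
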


\begin{problem}
Is there a non-empty, connected, countably infinite (vertex-transitive) graph which is invariant under local complementation and is not isomorphic to the Rado graph or $\cc_\Q$? Are there infinitely many such graphs? 
\end{problem}

In particular, is $\cc_\Q$ the unique, up to isomorphism, non-empty, connected, countable circle graph which is invariant under local complementation? It could also be interesting to look for finite (circle) graphs that are invariant under local complementation. I do not know of any connected example with more than one edge.

\medskip
We prove \Tr{thm aut} in \Sr{sec aut}. The  aforementioned facts about  \CQ\ are proved in \Sr{sec CQ}. In \Sr{sec rado} we prove that the  Rado graph is invariant under local complementation. In \Sr{sec prob} we consider a higher-dimensional analogue of circle graphs and conclude with related open problems.

\mymargin{ Notes: 
$\cc$ has diameter 2, but its induced subgraphs may have interesting geometry. James Davies said: ``any class of graphs forbidding a finite graph $H$ as a vertex-minor is $f(H)$-quasi-isometric to a graph of bounded tree width. In fact circle graphs are quasi-isometric to cacti. Complements of circle graphs are  quasi-isometric to graphs of bounded tree-width.'' }



\section{Preliminaries} \label{prel}

\subsection{Orders and the rationals} \label{sec ord}
The $d$-sphere $\BS^d$ is the set $\{x\in \R^d \mid ||x,(0,\ldots,0)|| = 1\}$, where $||\cdot||$ stands for euclidean distance. A \defi{chord} of $\BS^1$ is a line segment joining two distinct points of $\BS^1$.

We let \defi{$\Q^1$} denote the `rational' points of $\BS^1$. The most convenient way to make this precise is by identifying $\BS^1$ with the space obtained from $[0,1]$ by identifying $0$ with $1$.

The ordering of the set \Q\ of rationals has the property that 
\labtequ{betw}{\fe\ $a,b\in \Q$ there is 
$c\in \Q$ with $a<c<b$.}
In fact this property determines the order type of $(\Q,<)$: any countable order $(Q,<')$ satisfying \eqref{betw} is order-isomorphic to $(\Q,<)$. This fact easily extends to cyclic orderings by considering $\Q^1$. 

\begin{proposition} \label{nestedness}
Let $\chi: \BS^1 \to \BS^1$ be a bijection that preserves nestedness of pairs of distinct points. Then $\chi$ is a homeomorphism.
\end{proposition}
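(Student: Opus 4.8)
The plan is to reduce the statement to the order-theoretic fact that a bijection of $\BS^1$ which respects the cyclic order (up to reversal) carries open arcs to open arcs and is therefore a homeomorphism. The bridge between the hypothesis and this fact is the observation that nestedness encodes, combinatorially, on which side of a chord a point lies. Throughout I would call two disjoint pairs \emph{nested} when the corresponding chords do not cross inside $\BS^1$. Since $\chi$ is a bijection and, among the three ways of pairing four distinct points into two chords, exactly one pairing yields crossing chords, preserving nestedness is equivalent to preserving crossing; in particular $\chi^{-1}$ preserves nestedness as well.

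First I would record the elementary geometric fact that, for four distinct points, two points $z,w$ lie in the same one of the two open arcs cut off by the chord through $x,y$ precisely when $\{x,y\}$ and $\{z,w\}$ are nested (the chord through $z,w$ stays on one side of the chord through $x,y$ exactly when it does not cross it). Consequently, for any three distinct points $x,y,w$, the open arc $A_{x,y,w}$ bounded by $x$ and $y$ and containing $w$ has the intrinsic description
\[
A_{x,y,w} = \{w\} \cup \{\, z \in \BS^1 \setminus \{x,y,w\} : \{x,y\}\text{ and } \{z,w\}\text{ are nested} \,\},
\]
expressed purely in terms of the nestedness relation and the three marked points.

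Next I would feed this description into the hypothesis. Because $\chi$ is a bijection preserving nestedness, it carries $A_{x,y,w}$ onto $A_{\chi(x),\chi(y),\chi(w)}$; that is, $\chi$ maps every open arc onto an open arc. Running the same argument for $\chi^{-1}$ shows that $\chi^{-1}$ likewise maps open arcs to open arcs. Since the open arcs form a basis for the topology of $\BS^1$ — every open arc equals $A_{x,y,w}$ for any choice of its two endpoints $x,y$ and an interior point $w$ — both $\chi$ and $\chi^{-1}$ send basic open sets to open sets, so both are continuous and $\chi$ is a homeomorphism.

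The only real work lies in the first step: verifying cleanly that ``same side of a chord'' coincides with ``nested'', and handling the boundary behaviour at $x,y$ and at $w$ itself so that the displayed set is exactly an open arc, with no endpoints spuriously attached or missing. Everything afterwards is a formal basis-chasing argument. One subtlety worth flagging is orientation: nestedness is invariant under reflection, so the hypothesis only forces $\chi$ to respect the cyclic order up to reversal; but since a reflection is itself an arc-preserving homeomorphism, this causes no difficulty, as the characterization of $A_{x,y,w}$ and the basis argument above are indifferent to orientation.
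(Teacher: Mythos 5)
Your proof is correct, and it takes a genuinely more self-contained route than the paper's. The paper's proof is essentially a citation: it observes that it suffices to show $\chi$ preserves the cyclic ordering of $\BS^1$ up to inversion, and cites Coxeter (\S 3.4 of his projective geometry text) for the well-known fact that nestedness-preservation implies this; continuity of the inverse is then implicitly covered by compactness of $\BS^1$ (a continuous bijection from a compact space onto a Hausdorff space is a homeomorphism). You bypass both the cyclic order and the compactness argument. First you upgrade the hypothesis to bidirectional preservation via the counting argument on the three pairings of four distinct points, exactly one of which crosses --- a point the paper glosses over entirely, and one that is needed before anything can be said about $\chi^{-1}$. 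Then you characterize open arcs intrinsically through the nestedness relation, and finish with a basis argument that yields continuity of $\chi$ and $\chi^{-1}$ simultaneously. The paper's route buys brevity; yours buys a complete, elementary argument with no external reference. The step you flag as ``the only real work'' (that $z,w$ lie on the same side of the chord through $x,y$ \iff\ $\{x,y\}$ and $\{z,w\}$ are nested) is standard and is pitched at no lower a level of detail than the paper's appeal to Coxeter, so leaving it as stated is not a gap.
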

\begin{proof}
We just need to check that $h'$ is continuous, and for this it suffices to check that it preserves the cyclic ordering of $\BS^1$ up to inversion. This is an easy and well-known consequence of the fact that $h'$ preserves nestedness of pairs of distinct points of $\BS^1$ \cite[\S 3.4]{CoxProjective}. 
\end{proof}

\subsection{Graphs} \label{sec gra}
We follow the terminology of Diestel \DB. We use $V(G)$ to denote the set of vertices, and $E(G)$ the set of edges of a graph \G. For $S\subseteq V(G)$, the subgraph $G[S]$ of \g \defi{induced} by $S$ has vertex set $S$ and contains all edges of \g with both end-vertices in $S$.

Let $G$ be a graph, and $v\in V(G)$. The neighbourhood $N(v)=N_G(v)$ of $v$ is the set of vertices $u$ \st\ $uv\in E(G)$. We write \defi{$G_v$} for the graph obtained from $G$ by complementing the subgraph induced by $N(v)$, i.e.\ by deleting all edges with both end-vertices in $N(v)$, and introducing all edges within $N(v)$ that were missing from \G. We say that $G_v$ is obtained from \g by \defi{local complementation} at $v$.

The \defi{intersection graph} of a family of sets $\ch$ is the graph with vertex set \ch\ in which two vertices form an edge whenever their intersection is non-empty.

\section{Proof of \Tr{thm aut}}  \label{sec aut}

Given $x\in \BS^1$, notice that the set of vertices $\{xy \mid y\in \BS^1 - x\}$ of \cc\ induces a clique \defi{$K_x$}. We call $K_x$ a  \defi{boundary clique} of \cc. Obviously, $K_x\neq K_y$ if $x\neq y$.
\begin{lemma} \label{lem bc}
Every $h\in \aut(\cc)$ maps each boundary clique onto a boundary clique.
\end{lemma}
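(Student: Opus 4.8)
The plan is to characterise boundary cliques in a way that is manifestly invariant under $\aut(\cc)$, by first distinguishing, purely combinatorially, the two ways in which two chords can intersect. Call an edge $c_1c_2$ of $\cc$ a \emph{touching} edge if $c_1,c_2$ share an endpoint on $\BS^1$, and a \emph{crossing} edge if they meet in the interior of the disc; since two distinct chords meet in at most one point, every edge is of exactly one type, and a touching pair has three distinct endpoints while a crossing pair has four.

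First I would show that these two types of edge are distinguished by the induced subgraph $H(c_1,c_2)$ of $\cc$ on the common non-neighbours of $c_1$ and $c_2$, i.e.\ the chords disjoint from both. The endpoints of $c_1,c_2$ cut $\BS^1$ into $k$ open arcs, where $k=3$ for a touching pair and $k=4$ for a crossing pair. The key geometric observations are: (a) a chord disjoint from both $c_1$ and $c_2$ must have both of its endpoints in a single one of these arcs, since a chord straddling an endpoint $p_i$ crosses the chord of which $p_i$ is an endpoint; (b) two chords lying in different arcs are disjoint, as the convex hulls of distinct open arcs are essentially disjoint and the endpoints cannot coincide; and (c) for each arc the chords inside it induce a connected subgraph, because any two chords in an open arc are joined by a short chain of mutually overlapping chords. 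Together these say that $H(c_1,c_2)$ is the disjoint union, with no edges between the parts, of exactly $k$ nonempty connected subgraphs, so the number of connected components of $H(c_1,c_2)$ equals $k$. Hence any automorphism of $\cc$, preserving adjacency, non-adjacency, and connected components, maps touching edges to touching edges and crossing edges to crossing edges.

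It then remains to recover the boundary cliques from the touching edges alone. Let $\cc_t$ be the spanning subgraph of $\cc$ consisting of the touching edges; by the previous paragraph every $h\in\aut(\cc)$ restricts to an automorphism of $\cc_t$. A short case analysis shows that a set of chords that pairwise touch either shares a common endpoint $x$, and so lies in $K_x$, or consists of three chords $uv,vw,wu$ forming an inscribed triangle. Consequently the maximal cliques of $\cc_t$ are precisely the boundary cliques together with the inscribed triangles, and these are told apart by cardinality: a boundary clique is infinite, whereas an inscribed triangle has three vertices. Since $h$ preserves $\cc_t$, carries maximal cliques to maximal cliques, and preserves cardinality, it maps each boundary clique onto a boundary clique, as required.

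I expect the main work to be in the middle step, namely verifying (a)--(c) rigorously --- in particular the connectedness claim (c) and the assertion that no edges run between different arcs --- while the surrounding argument is bookkeeping. A secondary point to check is that every arc is nonempty, which holds because between any two points of $\BS^1$ there lies a chord, so that the component count is exactly $k$ rather than fewer.
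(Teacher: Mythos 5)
Your proposal is correct, and it shares the paper's two-step skeleton: first show that automorphisms preserve the combinatorial distinction between touching edges (the paper calls these \emph{incident} pairs) and crossing edges, then recover the boundary cliques from the touching structure alone. Where you genuinely differ is in how step one is proved. The paper fixes a crossing pair $C,D$, places witness chords $F_1,\dots,F_4$ in the four arcs together with chords $W_{ij}$ meeting $F_i$ and $F_j$ (hence forced to meet $C\cup D$), and derives a contradiction by pigeonhole when the four images $h(F_i)$ must fall into only three arcs of an incident pair. You instead convert the same 3-versus-4 arc count into a direct isomorphism invariant: the number of connected components of the induced subgraph on common non-neighbours. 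Your claims (a)--(c) all hold; for (c), note that any two chords with endpoints in the same open arc have a common neighbour inside that arc (take a chord with one endpoint inside one of them and the other endpoint in the arc but outside both), so each component has diameter at most~$2$. Your formulation buys some robustness: the paper's write-up pushes $W_{ij}$ forward and asserts that $h(W_{ij})$ misses $h(C)\cup h(D)$, but a chord meeting $h(F_i)$ and $h(F_j)$ can a priori also cross $h(C)$, so to run that argument cleanly one should pick the witness chord on the image side (meeting $h(F_i),h(F_j)$ but avoiding $h(C)\cup h(D)$, which exists because they lie in a common region) and pull it back through $h^{-1}$; your component count sidesteps this bookkeeping entirely, at the cost of verifying connectivity. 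Your step two is also slightly more careful than the paper's: the paper asserts that pairwise incident $n$-tuples with $n\geq 3$ share a common endpoint, which overlooks inscribed triangles when $n=3$ (harmless for the paper, since boundary cliques are infinite), whereas you isolate the inscribed triangles explicitly as the only other maximal cliques of the touching subgraph and discard them by cardinality, then conclude via preservation of maximal cliques rather than via the paper's maximality argument with $h^{-1}$.
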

\begin{proof}
Call two chords $C,D$ of $\BS^1$ \defi{incident}, if $C\cap D$ is a point of $\BS^1$. We claim that 
\labtequ{pres inc}{$h$ preserves pairs of incident chords of $\BS^1$.} 
For suppose not, and assume \obda\ that $C,D$ are not incident but $h(C),h(D)$ are. Since incident chords intersect, we have $h(C)h(D)\in E(\cc)$, and therefore $CD \in E(\cc)$ since $h$ is an automorphism. Thus $C,D$ intersect (at an interior point of the unit disc $\mathbb{D}$, see \fig{figChords}). Pick four chords $F_1,F_2,F_3,F_4$, one in each of the four intervals of $\BS^1 \sm (C \cup D)$, and additional chords $W_{ij}$ intersecting $F_i$ and $F_j$ \fe\ $i,j \in [4]$. Note that $W_{ij}$ must intersect at least one of $C,D$. Since $h(C),h(D)$ are incident, $\BS^1 \sm (h(C) \cup h(D))$ consists of three intervals, and one of them must contain at least two of the $h(F_i), \in [4]$, say $h(F_i)$ and $h(F_j)$. But then $h(W_{ij})$ does not intersect any of $h(C),h(D)$, a contradiction since $W_{ij}$ intersects at least one of $C,D$, and $h$ is an isomorphism of \cc. This proves \eqref{pres inc}.

\begin{figure} 
\begin{center}
\begin{overpic}[width=.95\linewidth]{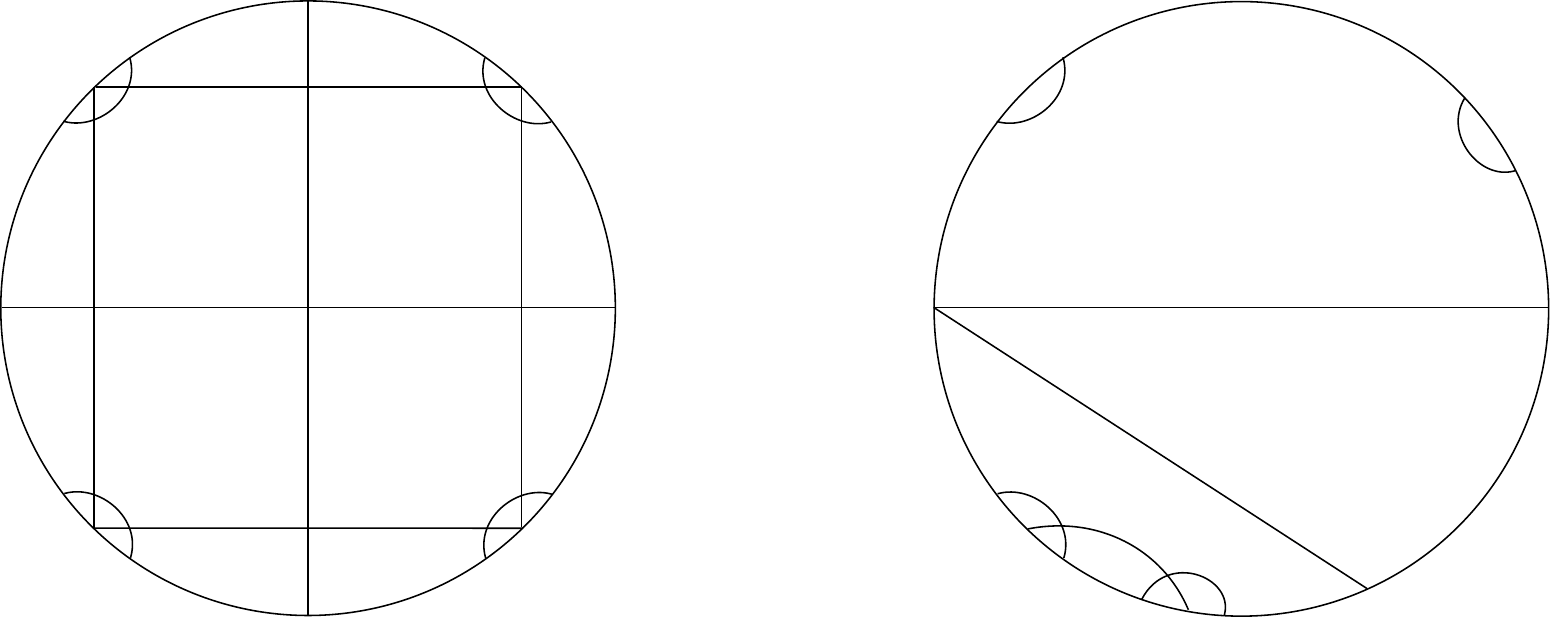} 
\put(21,25){$D$}
\put(13,21){$C$}
\put(72,21){$h(C)$}
\put(71,14){$h(D)$}
\put(2,34){$F_1$}
\put(2,3){$F_4$}
\put(35,34){$F_2$}
\put(33,3){$F_3$}
\put(12,35){$W_{12}$}
\put(57.5,5){$h(F_i)$}
\put(75,-3){$h(F_j)$}
\put(72,6){$W_{ij}$}
\put(48,21){$h$}
\put(46,18){$\longrightarrow$}

\end{overpic}
\end{center}
\caption{A contradiction to mapping non-incident chords $C,D$ to incident ones in the proof of \Lr{lem bc}.} \label{figChords}
\end{figure}

It follows  that $h$ preserves $n$-tuples of pairwise incident chords \fe\ $n\geq 3$, i.e.\ $n$-tuples of chords with a common endpoint on $\BS^1$. Since a boundary clique $K_x$ is such a tuple by definition, it follows that $h(K_x)$ is contained in $K_y$ for some $y\in \BS^1$. If $h(K_x)$ is a proper subset of $K_y$, then $h^{-1}(K_y)$ is not contained in $K_x$ but intersects it, which contradicts that $h^{-1}$ preserves  pairwise incident chords. Thus $h(K_x)=K_y$.
\end{proof}

\begin{remark} \label{rem et}
\eqref{pres inc} implies that \cc\ is not edge-transitive: no $h\in \aut(\cc)$ can map an edge of \cc\ comprising a pair of incident chords to an edge comprising a pair of intersecting non-incident chords.
\end{remark} 
\begin{proof}[Proof of \Tr{thm aut}]
It is straightforward to check that $\pi: \aut(\BS^1)\to \aut(\cc)$ is an injective homomorphism, and so it only remains to show that it is surjective. For this, given $h\in \aut(\cc)$, we will find $h'\in \aut(\BS^1)$ \st\ $\pi(h')= h$.

To define $h'$, we recall that, by \Lr{lem bc}, \fe\ $x\in \BS^1$, we have $h(K_x) = K_y$ for a unique $y\in \BS^1$. We define $h'$ by $x \mapsto y$. 

We claim that $h': \BS^1 \to \BS^1$ is a bijection satisfying the assumption of  \Prr{nestedness}. 

To see that $h'$ is injective, suppose $x\neq y\in \BS^1$, and $h'(x)=h'(z)=y$. Then $h(K_x) = h(K_z) = K_y$, which contradicts \Lr{lem bc} when applied to $h^{-1}$ and $y$ since $K_x\neq K_z$.

To see that $h'$ is surjective, pick $y\in \BS^1$, and note that $h^{-1}(K_y)$ coincides with some boundary clique $K_x$ using \Lr{lem bc} again, and so $h'(x)=y$.

Easily,  \fe\ $u,v,x,y\in \BS^1$, we have
\labtequ{pi h}{$h(uv)=xy$ \iff\ $h'(\{u,v\}) = \{x,y\}$}
because $K_u, K_v$ are the unique boundary cliques containing $uv$, and similarly for $ K_x, K_y, xy$. 
This immediately yields $\pi(h')= h$ as desired.

By \eqref{pi h}, $h'$ preserves nestedness of pairs of distinct points of $\BS^1$, and so it is a homeomorphism by \Prr{nestedness}.
\end{proof}

Let $\aut_\Q(\BS^1)$ denote the subgroup of $\aut(\BS^1)$ comprising those homeomorphisms that fix $\Q^1$. Our proof of \Tr{thm aut} adapts with minor changes to yield
\begin{corollary} \label{cor cq}
The restriction of $\pi$ to $\aut_\Q(\BS^1)$ is an isomorphism onto $\aut(\CQ)$.

\end{corollary}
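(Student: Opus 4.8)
The plan is to run the proof of \Tr{thm aut} again while keeping track of rationality throughout. First I would dispose of the easy direction: if $g\in\aut_\Q(\BS^1)$ then $g$ permutes $\Q^1$, hence maps rational chords to rational chords and induces an automorphism $\pi(g)$ of \CQ, so the restriction of $\pi$ does land in $\aut(\CQ)$. It is clearly a homomorphism, and it is injective because a homeomorphism of $\BS^1$ inducing the identity on \CQ\ fixes every rational chord, hence fixes every point of the dense set $\Q^1$ (two distinct rational chords through a common point $x$ already force $g(x)=x$), and is therefore the identity by continuity. So, as in \Tr{thm aut}, the real content is surjectivity.

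For surjectivity I would first establish the analogue of \Lr{lem bc} for \CQ: writing $K_x=\{xy\mid y\in\Q^1-x\}$ for $x\in\Q^1$, every $h\in\aut(\CQ)$ maps each such rational boundary clique onto a boundary clique. The proof of \Lr{lem bc} transfers essentially verbatim; the only change is that the auxiliary chords $C,D,F_1,\dots,F_4,W_{ij}$ produced in the argument for \eqref{pres inc} must now be chosen with both endpoints in $\Q^1$. This is possible because $\Q^1$ is dense in $\BS^1$: each of the relevant arcs of $\BS^1$ contains infinitely many rational points, so every required intersection pattern of chords can be realised by rational chords. With \eqref{pres inc} in hand for \CQ, the deduction that $h(K_x)=K_y$ for a unique $y\in\Q^1$ is unchanged. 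I would then define $h'\colon\Q^1\to\Q^1$ by $h'(x)=y$ where $h(K_x)=K_{y}$, and argue exactly as in \Tr{thm aut} that $h'$ is a bijection of $\Q^1$ satisfying the analogue of \eqref{pi h}, and hence preserves nestedness of pairs of distinct points of $\Q^1$.

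The one genuinely new step — and the point where I expect the real work to sit — is the extension of $h'$ from $\Q^1$ to all of $\BS^1$. In \Tr{thm aut} the map $h'$ was defined on all of $\BS^1$ at once, because boundary cliques exist there for every point; here \CQ\ only supplies boundary cliques at rational points. I would extend $h'$ by exploiting that $(\Q^1,<)$ is a countable dense cyclic order and $\BS^1$ is its cyclic-order completion: a nestedness-preserving bijection of $\Q^1$ extends uniquely to a nestedness-preserving bijection of $\BS^1$, assigning to each irrational point the cut it determines among the rationals. The extended map fixes $\Q^1$ setwise by construction, is a homeomorphism by \Prr{nestedness}, and so lies in $\aut_\Q(\BS^1)$; the analogue of \eqref{pi h} then gives $\pi(h')=h$. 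The care required — and the main obstacle — is to verify that the extension preserves nestedness of \emph{all} pairs of points, including irrational ones, rather than merely of rational pairs: this is a standard but not entirely automatic density argument, amounting to checking that the cyclic order of $\BS^1$ is determined on irrational points by its restriction to the dense set $\Q^1$, so that an order-preserving bijection of $\Q^1$ cannot scramble the order of the gaps it fills.
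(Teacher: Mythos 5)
Your proposal is correct and takes essentially the same approach as the paper, which offers no separate proof of \Cr{cor cq} beyond stating that the proof of \Tr{thm aut} ``adapts with minor changes''. The changes you identify are exactly the right ones: realising the auxiliary chords in the proof of \Lr{lem bc} with endpoints in $\Q^1$ (possible by density), and extending the resulting nestedness-preserving bijection of $\Q^1$ to all of $\BS^1$ via order-completion before invoking \Prr{nestedness}.
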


\section{Properties of \CQ.}  \label{sec CQ}

In this section we prove the facts about \CQ\ mentioned in the introduction. We start with universality.

\begin{proof}[{Proof of \Tr{thm univ}} 
]
Let \g be a countable circle graph, and let $v_0,v_1,\ldots$ be an enumeration of $V(G)$. We will realise \g as an induced subgraph of \CQ. 

For each $v_i$, let $p_i,q_i \in \BS^1$ be the two endpoints of the chord $v_i$. We can recursively map the $p_i,q_i$ to $\Q$ so that their cyclic-ordering is preserved. Indeed, having defined images $\pi(p_i),\pi(q_i)$ \fe\ $i<j$, we can use \eqref{betw} to define $\pi(p_j),\pi(q_j)$ so that the cyclic ordering of the $p_i,q_i$ coincided with that of $\pi(p_i),\pi(q_i)$. 

We can then map $V(G)$ to $V(\cc_\Q)$ by letting $\pi(v_i)$ be the chord spanned by $\pi(p_i),\pi(q_i)$. The subgraph of $\cc_\Q$ induced by $\pi(V(G))$ is then isomorphic to $G$, because $v_i,v_j$ intersect if and only if  $\pi(v_i),\pi(v_j)$ do by the construction of $\pi$. 
\end{proof}

Some of the ideas used in the proof of \Tr{thm univ} imply that being a circle graph is a finitary property in the following sense: 
\begin{observation} \label{obs}
Let \g be a countable graph every finite subgraph of which is a circle graph. Then \g is a circle graph.
\end{observation}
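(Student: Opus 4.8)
The goal is to show that a countable graph $G$, all of whose finite subgraphs are circle graphs, is itself a circle graph. The plan is to use a compactness argument to upgrade the representations of the finite pieces into a single coherent representation of $G$, mirroring the ideas in the proof of \Tr{thm univ}.

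First I would reformulate what it means for a finite graph to be a circle graph in a combinatorial, chord-diagram-free way. A finite graph $H$ on vertex set $\{w_1,\ldots,w_n\}$ is a circle graph precisely when there is a cyclic arrangement of the $2n$ chord-endpoints (two per vertex) around $\BS^1$ such that two chords cross if and only if the corresponding vertices are adjacent in $H$. Thus a witness that $H$ is a circle graph is a \emph{combinatorial chord diagram}: a double occurrence word, or equivalently a cyclic order on the multiset of endpoints, that induces exactly the adjacencies of $H$. The key point is that whether two chords cross depends only on the relative cyclic order of their four endpoints, so the data is finite and purely combinatorial.

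Next I would set up the compactness framework. Enumerate $V(G)=\{v_0,v_1,\ldots\}$, and for each $n$ let $G_n := G[\{v_0,\ldots,v_{n-1}\}]$. Each $G_n$ is a finite circle graph, so each admits a combinatorial chord diagram $D_n$. The natural move is to encode ``$G$ is a circle graph'' as the satisfiability of a countable set of propositional constraints and invoke the compactness theorem for propositional logic (\ThmCompC, stated earlier as the compactness theorem in the macros). Concretely, I would introduce boolean variables recording, for each ordered triple of endpoints, their relative cyclic position, together with clauses enforcing (i) that these variables describe a genuine cyclic order (consistency/transitivity-type axioms for cyclic betweenness), and (ii) that two chords cross iff the associated vertices are adjacent in $G$. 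Every finite subset of these clauses involves only finitely many vertices, hence is satisfiable because the corresponding finite subgraph is a circle graph; compactness then yields a global satisfying assignment, i.e. a cyclic order on all endpoints realizing $G$ as a chord diagram. Finally, since $G$ is countable one can embed this countable cyclic order into $\BS^1$ (indeed into $\Q^1$, using \eqref{betw} exactly as in the proof of \Tr{thm univ}), producing an actual family of chords whose intersection graph is $G$.

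Alternatively, and perhaps more cleanly, I would bypass explicit propositional encoding and run a direct König's-infinity-lemma / diagonal argument: build a tree whose level-$n$ nodes are the valid combinatorial chord diagrams of $G_n$, with a diagram at level $n$ joined to a diagram at level $n-1$ when the former restricts to the latter. Each level is a finite nonempty set (nonempty since $G_n$ is a circle graph), and every node has a predecessor, so \LemKonig gives an infinite branch, which assembles into a compatible chord diagram for all of $G$. The main obstacle in either route is the bookkeeping that guarantees \emph{coherence}: a chord diagram for $G_n$ need not be the restriction of one for $G_{n+1}$, since the finitely-many representations are chosen independently, so I must verify that ``restriction'' really is a map from level $n+1$ to level $n$ with the finite, nonempty, predecessor-covering properties König's lemma requires. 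Getting the notion of combinatorial chord diagram and its restriction exactly right—so that crossing is determined by local cyclic order and restriction preserves it—is the crux; once that is pinned down, both the compactness and the König arguments, and the final embedding into $\Q^1$, are routine.
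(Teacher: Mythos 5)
Your proposal is correct and takes essentially the same route as the paper: the paper also reduces each finite realisation to its finite cyclic-order data, invokes ``an elementary compactness argument'' to extract a limit ordering (a sub-order of $\Q^1$), and reads off a realisation of $G$ from that limit. Your K\"onig's-lemma and propositional-compactness formulations are just explicit implementations of that compactness step, and the coherence issue you flag resolves exactly as you suspect, since the restriction of a valid chord diagram for $G_{n+1}$ is always a valid chord diagram for $G_n$.
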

\begin{proof}[Proof (sketch)]
Let $v_0,v_1,\ldots$ be an enumeration of $V(G)$, and let $G_i:= G[\{v_0, \ldots, v_i\}]$ be the subgraph induced by the vertices up to $v_i$. Pick a realisation $R_i$ of each $G_i$ as a circle graph, and note that the ordering of the points of $\BS^1$ used by $R_i$ determines $G_i$. By an elementary compactness argument, there is a subsequence of \seq{R}\ along which these orderings converge, to a sub-order of $\Q^1$. This limit ordering defines a realisation of \g as a circle graph. We leave the details to the interested reader.
\end{proof}

In particular, Bouchet's characterization of the finite circle graphs in terms of forbidden vertex minors extends to the countably infinite case. 


\medskip
Next, we prove that \CQ\ is \ILC.

\begin{proof}[{Proof of \Tr{thm locom}}]
We construct an auxilliary circle graph $\cc'$ that has no incident chords and is isomorphic, as an abstract graph, to $\cc_\Q$. Thus it will suffice to prove that $\cc'$ is invariant under local complementation. 

For this, let $\ck \subset [0,1]$ be the complement of the standard Cantor set. Note that \ck\ is a countable set $\ci$ of disjoint intervals, which, when ordered by the natural ordering $<$ induced by that of $\R$, satisfy \eqref{betw}. Therefore, there is an order-preserving bijection $o: (\Q \cap (0,1)) \to \ci$. 
We define $\cc'$ from $\cc_\Q$ by `blowing-up' each $q\in \Q^1$ into $o(q) \cap \Q$, so that each chord in $V(\cc_\Q)$ incident with $q$ becomes incident with a distinct point in $o(q)$, and intersections of chords are preserved. To achieve this, we start by choosing, for each $q\in \Q^1$, an injection $o_q$ from $\Q^1 - q$  into the interval $o(q)$ \st\ $a<b$ \iff\ $o_q(a)< o_q(b)$ holds \fe\ $a,b\in \Q^1 - q$. For example, we could let $q'$ be the midpoint of the chord joining the two endpoints of $o_q$, and let $o_q(a)$ be the other endpoint of the chord of $\BS^1$ emanating from the midpoint of $o(a)$ and passing through $q'$ (\fig{figMidpoints}). 
Given $p,q\in \BS^1$, let \defi{$c(p,q)$} denote the $p$--$q$~chord of $\BS^1$. Having chosen $o_q$ \fe\ $q\in \Q^1$, we then map each vertex $v=c(p,q)$ of $\cc_\Q$ to the chord $o(v):= c(o_q(p),o_p(q))$. Let $\cc'$ be the circle graph induced by  $o(V(\cc_\Q))$. It is straightforward to check that $o(v)$ intersects $o(v')$ \iff\ $v$ intersects $v'$. In other words, $\cc'$ is isomorphic to $\cc_\Q$. 
Moreover, no two vertices of $\cc'$ share an endpoint on $\BS^1$ by construction.


\begin{figure} 
\begin{center}
\begin{overpic}[ width=.25\linewidth]{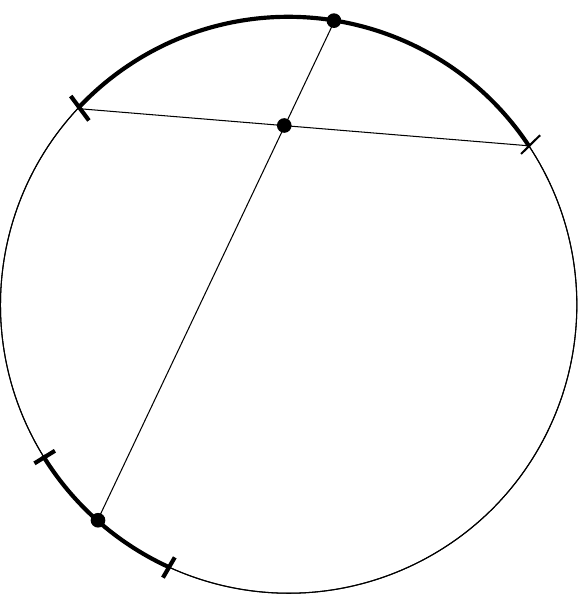} 
\put(23,104){$\leftarrow o(q) \rightarrow $}
\put(-3,2){$o(a)$}
\put(47,68){$q'$}
\put(53,86){$o_q(a)$}
\end{overpic}
\end{center}
\caption{Defining the  injection $o_q$ in the proof of \Tr{thm locom}.} \label{figMidpoints}
\end{figure}

It remains to check that $\cc'$ is invariant under local complementation. Pick a vertex $v=o(c(p,q))$ of $\cc'$, and let $H:= \cc'_v$.  Using a common trick, we can realise $H$ as a circle graph by flipping one of the two intervals $I$ of $\BS^1$ bounded by $o_q(p),o_p(q)$ (see e.g.\ \cite[Figure 1.3]{McCarty}). We can do so in such a way that $I\cap o(p)$ is mapped onto $I\cap o(q)$, and $\ck\cap I$ is mapped onto itself. But this realisation of $H$ coincides with $\cc'$: the two graphs have exactly the same chords. 
\end{proof}

\section{On the Rado graph} \label{sec rado}

We record the following basic fact about the Rado graph, which may be obvious to some readers:

\begin{observation} \label{Rado}
The Rado graph $R$ is \ILC.
\end{observation}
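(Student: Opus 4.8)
The plan is to prove $R_v \cong R$ for every vertex $v$ by verifying that $R_v$ satisfies the \emph{extension property} that characterises the Rado graph: a countable graph is isomorphic to $R$ if and only if for all disjoint finite vertex sets $U,W$ there is a vertex $z\notin U\cup W$ adjacent to every vertex of $U$ and to no vertex of $W$. Since $R_v$ has the same (countable) vertex set as $R$, it suffices to establish this property for $R_v$, and the entire argument will consist of translating a prescribed finite adjacency pattern in $R_v$ into one in $R$ and then invoking the extension property of $R$.

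The key bookkeeping observation is how local complementation alters adjacencies: a pair $\{a,b\}$ has its adjacency flipped precisely when both $a,b\in N(v)$, while every other adjacency is preserved. In particular no edge incident to $v$, and no edge incident to a vertex outside $N(v)$, is affected. Consequently, for a candidate vertex $z$, whether or not $z\in N(v)$ dictates exactly how its constraints in $R_v$ correspond to constraints in $R$: if $z\notin N(v)$ then $z$ has identical neighbourhoods in $R$ and $R_v$, whereas if $z\in N(v)$ then its adjacencies to the other vertices of $N(v)$ are inverted.

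I would then split into two cases according to the role of $v$. If $v\notin U$, I would look for $z$ with $z\notin N(v)$: apply the extension property of $R$ with connect-set $U$ and avoid-set $W\cup\{v\}$ (these are disjoint since $U,W$ are disjoint and $v\notin U$). As $z\notin N(v)$, no edge at $z$ is flipped, so $z$ is adjacent in $R_v$ to all of $U$ and to none of $W$, as required. If instead $v\in U$, then I must force $z\in N(v)$ (because adjacency to $v$ is unchanged and we need $z\sim v$ in $R_v$). Partitioning $U\setminus\{v\}$ and $W$ by membership in $N(v)$ as $U_1,U_2$ and $W_1,W_2$, the flip rule shows the desired $R_v$-constraints are equivalent to asking that $z$ be adjacent in $R$ to $\{v\}\cup U_2\cup W_1$ and non-adjacent to $U_1\cup W_2$; these two sets are finite and disjoint, so the extension property of $R$ supplies such a $z$, and unwinding the translation confirms it meets all the $R_v$-constraints.

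In both cases the required vertex exists, so $R_v$ has the extension property and hence $R_v\cong R$. The only delicate point is the case $v\in U$, where $z$ is forced into $N(v)$ and the edges from $z$ to the remaining neighbours of $v$ get flipped; the main work is simply to invert the constraints correctly there, which is routine once the flip rule is recorded. I expect no substantive obstacle beyond this bookkeeping.
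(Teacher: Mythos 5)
Your proposal is correct and follows essentially the same route as the paper: verify the extension property for $R_v$, split on whether $v\in U$, and translate the required adjacencies through the flip rule, using $W\cup\{v\}$ as the avoid-set when $v\notin U$ and the complemented connect/avoid sets when $v\in U$. In fact your sets in the case $v\in U$ (adjacent to $(U\setminus N(v))\cup(W\cap N(v))$, non-adjacent to $(U\cap N(v))\cup(W\setminus N(v))$) are the correct ones, whereas the paper's displayed formula for $W'$ contains a typo (it writes $U\setminus N(v)$ where $W\setminus N(v)$ is meant, which would make $U'$ and $W'$ non-disjoint); your bookkeeping implicitly corrects this.
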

\begin{proof}
It is well-known that $R$ is, up to isomorphism, the unique countable graph with the following extension property:
\labtequ{ext}{for every two disjoint finite subsets $U,W$ of $V(R)$, there is a vertex $x\in  V(R) \sm (U\cup W)$ \st\ $N(x) \cap (U\cup W) = U$.}
Pick $v\in V(R)$. Our task is to prove that $R_v$ is isomorphic to $R$, and so it suffices to check that $R_v$ satisfies \eqref{ext} by the aforementioned uniqueness property. So let $U,W$ be disjoint finite subsets of $V(R_v)(=V(R))$. We distinguish two cases: 

If $v\not\in U$, we pick a vertex $w$ satisfying \eqref{ext} in $R$ \wrt\ the sets $U$ and $W':=W \cup \{v\}$. Thus $wv\not\in E(R)$, and therefore $N_R(w)=N_{R_v}(w)$, which means that $w$ satisfies \eqref{ext} in $R_v$ \wrt\ $U$ and $W$.

If $v\in U$, we choose 
\begin{align*}
U'&:=(U \sm N(v)) \cup (W \cap N(v)) \text{ and }\\
W'&:= (U \cap N(v)) \cup (U \sm N(v)),
\end{align*}
and pick a vertex $w$ satisfying \eqref{ext} in $R$ \wrt\ $U'$ and $W'$. Then $wv\in E(R)$ since $v\in U'$, and after the complementation of $N(v)$, we have $N_{R_v}(w) \cap (U \cup W) =U$ as desired. 
\end{proof}


\section{Further problems} \label{sec prob}

It is known that every finite distance-hereditary graph is a circle graph \cite{WikiDHG}. This motivates the following:

\begin{problem} \label{dh}
Is there a strongly universal (countable) distance-hereditary graph? If yes, is there a vertex-transitive one?
\end{problem}

The definition of circle graphs can be generalised  to higher dimensions as follows. A 
\defi{\dsg} is an intersection graph of a subfamily of 
$$\ch_d:= \{ \mathbb{D}^d \cap H \mid H \text{ is a hyperplane of $\R^{d+1}$}\},$$
where the $d$-disc $\mathbb{D}^d$ is the set $\{x\in \R^{d+1} \mid |x,0|\leq 1\}$, and $\BS^d$ is its frontier. Thus $\ch_2$ consists of the flat discs with boundary on $\BS^2$. 

These graph classes are studied in \cite{sphereDim}, where it is proved that \fe\ $d\geq 2$, a countable graph is a $d$-sphere graph \iff\ it is an intersection graph of metric spheres in $\R^d$.

In analogy to \cc, we can define  \defi{the $d$-sphere graph} $\cc^d$ as the intersection graph of the family $\ch$. Notice that $\cc^1=\cc$.


\medskip

It would be interesting to try to extend the results of this paper to $d>1$:

\begin{problem}
Is there a strongly universal element for the class of countable \dsg s for every $d>1$?
\end{problem}

\begin{problem}
Is $\aut(\BS^d)$ isomorphic to $\aut(\cc^d)$ for every $d>1$?
\end{problem}




\acknowledgement{I thank James Davies for a discussion that triggered this note}. 

\bibliographystyle{plain}
\bibliography{collective}

\end{document}